\newcommand{\subj}[1]{\par\noindent{\bf AMS Subject Classifications: }#1.}
\newcommand{\keyw}[1]{\par\noindent{\bf Keywords: }#1.}
\numberwithin{equation}{section}
\numberwithin{figure}{section}
\newtheorem{theorem}{Theorem}[section]
\newtheorem{lemma}[theorem]{Lemma}
\theoremstyle{definition}
\newtheorem{definition}[theorem]{Definition}
\theoremstyle{remark}
\newtheorem{remark}[theorem]{Remark}
\date{}
\newcommand{\adsa}
{\vspace{-1in}\normalsize\flushleft
This is a preprint of a paper whose final and definite form will be published in\\
Advances in Dynamical Systems and Applications, ISSN 0973-5321 ({\tt http://campus.mst.edu/adsa}).\\
\vspace{1mm}\hrule\vspace{5mm}
\renewcommand\thefootnote{{}}
\footnotetext{\noindent\tt Submitted 15/Mar/2014; Revised 23/Apr/2014; Accepted 12/May/2014.\par
\hspace*{8pt}Communicated by Sandra Pinelas}}
\begin{document}

\title{\adsa \center\Large\bf Noether's Theorem with Momentum
and Energy Terms for Cresson's Quantum Variational Problems}

\author{{\bf Gast\~{a}o S. F. Frederico}\\
Department of Mathematics, Federal University of St. Catarina\\
P.O. Box 476, 88040--900 Florian\'{o}polis, Brazil\\[0.2cm]
Department of Science and Technology\\
University of Cape Verde, Palmarejo, 279 Praia, Cape Verde\\
{\tt gastao.frederico@docente.unicv.edu.cv}
\and
{\bf Delfim F. M. Torres}\\
Center for Research and Development in Mathematics and Applications\\
Department of Mathematics, University of Aveiro, Aveiro, Portugal\\
{\tt delfim@ua.pt}}


\date{}

\maketitle


\thispagestyle{empty}


\begin{abstract}
We prove a DuBois--Reymond necessary optimality condition and
a Noether symmetry theorem to the recent quantum variational calculus of Cresson.
The results are valid for problems of the calculus of variations with
functionals defined on sets of nondifferentiable functions. As
an application, we obtain a constant of motion for a
linear Schr\"{o}dinger equation.
\end{abstract}


\subj{49K05, 49S05, 81Q05}

\keyw{Cresson's quantum calculus of variations, symmetries, constants of motion,
DuBois--Reymond optimality condition, Noether's theorem, Schr\"{o}dinger equation}


\section{Introduction}

Quantum calculus, sometimes called ``calculus without limits'',
is analogous to traditional infinitesimal calculus without
the notion of limits \cite{MR1865777}. Several dialects
of quantum calculus are available in the literature, including
Jackson's quantum calculus \cite{MR1865777,MyID:220},
Hahn's quantum calculus \cite{MyID:194,MR3028184,MyID:187},
the time-scale $q$-calculus \cite{MR1843232,MyID:146},
the power quantum calculus \cite{MyID:185},
and the symmetric quantum calculi \cite{MyID:222,MyID:229,MyID:246}.
Here we consider the recent quantum calculus of Cresson.

Motivated by Nottale's theory of scale relativity without the hypothesis
of space–time differentiability \cite{Nottale:1992,Nottale:1999}, Cresson
introduced in 2005 his quantum calculus on a set of H\"{o}lder functions \cite{MR2138974}.
This calculus attracted attention due to its
applications in physics and the calculus of variations,
and has been further developed by several different authors --
see \cite{MyID:131,MR2738030,MyID:111} and references therein.
Cresson's calculus of 2005 \cite{MR2138974} presents, however, some problems,
and in 2011 Cresson and Greff improved it \cite{MR2798411,Cresson:2011}.
It is this new version of 2011 that we consider here, a brief review of it
being given in Section~\ref{sec:2}. Along the text, by \emph{Cresson's calculus}
we mean this quantum version of 2011 \cite{MR2798411,Cresson:2011}.

There is a close connection between quantum calculus and the calculus of variations.
For the state of the art on the quantum calculus of variations
we refer the reader to the recent book \cite{book:quant}.
With respect to Cresson's approach, the quantum calculus of variations is still in its infancy:
see \cite{MyID:162,MyID:188,MyID:253,MR2798411,Cresson:2011,Dubois:2012}.
In \cite{MR2798411} a Noether type theorem is proved but only with the momentum term.
In \cite{Cresson:2011} nondifferentiable Euler--Lagrange equations are used in the study of PDEs.
It is proved that nondifferentiable characteristics for the Navier--Stokes equation correspond
to extremals of an explicit nondifferentiable Lagrangian system, and that the solutions of the
Schr\"{o}dinger equation are nondifferentiable extremals of the Newton Lagrangian.
Euler--Lagrange equations for variational functionals with Lagrangians containing
multiple quantum derivatives, depending on a parameter, or containing higher-order quantum
derivatives, are studied in \cite{MyID:162}. Variational problems with constraints,
with one and more than one independent variable, of first and higher-order type,
are investigated in \cite{MyID:188}. Recently, Hamilton--Jacobi equations were obtained
\cite{Dubois:2012} and problems of the calculus of variations and optimal control with time delay
were considered \cite{MyID:253}. Here we extend the available nondifferentiable Noether's theorem
of \cite{MR2798411} by considering invariance transformations that also change the time variable,
and thus obtaining not only the generalized momentum term of \cite{MR2798411}
but also a new energy term. For that we first obtain a new necessary optimality
condition of DuBois--Reymond type.

The text is organized as follows.
In Section~\ref{sec:2} we recall the notions and results
of Cresson's quantum calculus needed in the sequel.
Our main results are given in Section~\ref{sec:mr}:
the nondifferentiable DuBois--Reymond necessary optimality condition
(Theorem~\ref{theo:cdrnd}) and the nondifferentiable Noether type
symmetry theorem (Theorem~\ref{theo:tnnd}). We end with
an application of our results to the linear Schr\"{o}dinger equation
(Section~\ref{sec:appl}).


\section{Cresson's Quantum Calculus}
\label{sec:2}

We briefly review the necessary concepts and results of
the quantum calculus \cite{Cresson:2011}.
Let $\mathbb{X}^d$ denote the set $\mathbb{R}^{d}$ or $\mathbb{C}^{d}$,
$d \in \mathbb{N}$, and $I$ be an open set in $\mathbb{R}$
with $[t_1,t_2]\subset I$, $t_1<t_2$.
Denote by $\mathcal{G}\left(I,\mathbb{X}^d\right)$
the set of functions $f:I \rightarrow \mathbb{X}^d$
and by $\mathcal{C}^{0}\left(I,\mathbb{X}^d\right)$
the subset of functions of $\mathcal{G}\left(I,\mathbb{X}^d\right)$
that are continuous.

\begin{definition}[The $\epsilon$-left and $\epsilon$-right quantum derivatives \cite{Cresson:2011}]
Let $f\in \mathcal{C}^{0}\left(I, \mathbb{R}^{d}\right)$. For all
$\epsilon>0$, the $\epsilon$-left and $\epsilon$-right quantum derivatives of
$f$, denoted respectively by $\Delta_{\epsilon}^{-}f$
and $\Delta_{\epsilon}^{+}f$, are defined by
\begin{equation*}
\Delta_{\epsilon}^{-}f(t)=\frac{f(t)-f(t-\epsilon)}{\epsilon}
\quad \text{ and } \quad
\Delta_{\epsilon}^{+}f(t)=\frac{f(t+\epsilon)-f(t)}{\epsilon} \, .
\end{equation*}
\end{definition}

\begin{remark}
The $\epsilon$-left and $\epsilon$-right quantum derivatives
of a continuous function $f$ correspond to the classical derivative
of the $\epsilon$-mean function $f_{\epsilon}^{\sigma}$ defined by
\begin{equation*}
f_{\epsilon}^{\sigma}(t)=\frac{\sigma}{\epsilon}
\int_{t}^{t+\sigma\epsilon}f(s)ds\, ,
\quad \sigma=\pm \, .
\end{equation*}
\end{remark}

The next operator generalizes the classical derivative.

\begin{definition}[The $\epsilon$-scale derivative \cite{Cresson:2011}]
\label{def:qd}
Let $f\in \mathcal{C}^{0}\left(I,\mathbb{R}^{d}\right)$.
For all $\epsilon>0$, the $\epsilon$-scale
derivative of $f$, denoted by $\frac{\square_{\epsilon}f}{\square t}$,
is defined by
\begin{gather*}
\frac{\square_{\epsilon}f}{\square t}
=\frac{1}{2}\left[\left(\Delta_{\epsilon}^{+}f
+\Delta_{\epsilon}^{-}f\right)
+i\mu\left(\Delta_{\epsilon}^{+}f
-\Delta_{\epsilon}^{-}f\right)\right],
\end{gather*}
where $i$ is the imaginary unit and $\mu=\{-1,1,0,-i,i\}$.
\end{definition}

\begin{remark}
If $f$ is differentiable, then one can take the limit of the scale
derivative when $\epsilon$ goes to zero. We then obtain the
classical derivative $\frac{df}{dt}$ of $f$.
\end{remark}

We also need to extend the scale derivative to complex valued
functions.

\begin{definition}[See \cite{Cresson:2011}]
Let  $f\in \mathcal{C}^{0}\left(I,\mathbb{C}^{d}\right)$
be a continuous complex valued function.
For all $\epsilon>0$, the $\epsilon$-scale derivative of $f$,
denoted by $\frac{\square_{\epsilon}f}{\square t}$, is defined by
\begin{gather*}
\frac{{\square}_{\epsilon}f}{{\square}t}
=\frac{{\square}_{\epsilon}\textrm{Re}(f)}{\square t}
+i\frac{\square_{\epsilon}\textrm{Im}(f)}{\square t} \, ,
\end{gather*}
where $\textrm{Re}(f)$ and $\textrm{Im}(f)$ denote the real and
imaginary part of $f$, respectively.
\end{definition}

In Definition~\ref{def:qd}, the $\epsilon$-scale derivative
depends on $\epsilon$, which is a free parameter related to the
smoothing order of the function. This brings many difficulties in
applications to physics, when one is interested in particular
equations that do not depend on an extra parameter. To solve these
problems, the authors of \cite{Cresson:2011} introduced a procedure
to extract information independent of $\epsilon$ but related
with the mean behavior of the function.

\begin{definition}[See \cite{Cresson:2011}]
Let ${C^0_{conv}}\left(I\times
(0,1),\mathbb{R}^{d}\right)\subseteq {C^0}\left(I\times
(0,1),\mathbb{R}^{d}\right)$ be such that for any function $f \in
{C^0_{conv}}\left(I\times (0,1),\mathbb{R}^{d}\right)$ the
$\lim_{\epsilon\to 0}f(t,\epsilon)$
exists for any $t\in I$; and $E$ be a complementary of
${C^0_{conv}}\left(I\times (0,1),\mathbb{R}^{d}\right)$ in
${C^0}\left(I\times (0,1),\mathbb{R}^{d}\right)$. We define
the projection map $\pi$ by
$$
\begin{array}{lcll}
\pi: & {C^0_{conv}}\left(I\times (0,1),\mathbb{R}^{d}\right)
\oplus E & \to & {C^0_{conv}}(I\times \left(0,1),\mathbb{R}^{d}\right)\\
& f_{conv}+f_E  & \mapsto & f_{conv}
\end{array}
$$
and the operator $\left< \cdot \right>$ by
$$
\begin{array}{lcll}
\left< \cdot \right>: & {C^0}\left(I\times (0,1),\mathbb{R}^{d}\right)
& \to & {C^0}\left(I,\mathbb{R}^{d}\right)\\
& f & \mapsto & \left< f \right>: t\mapsto
\displaystyle\lim_{\epsilon\to 0}\pi(f)(t,\epsilon)\,.
\end{array}
$$
\end{definition}

The quantum derivative of $f$ without the dependence
of $\epsilon$ is introduced in \cite{Cresson:2011}.

\begin{definition}[See \cite{Cresson:2011}]
\label{def:ourHD}
The quantum derivative of $f$ in the space
$\mathcal{C}^{0}\left(I, \mathbb{R}^{d}\right)$
is given by the rule
\begin{equation}
\label{eq:scaleDer}
\frac{\Box f}{\Box t}=\left<
\frac{{\Box_{\epsilon}}f}{\Box t} \right>.
\end{equation}
\end{definition}

The scale derivative \eqref{eq:scaleDer} has some nice properties.
Namely, it satisfies a Leibniz and a Barrow rule. First let us
recall the definition of an $\alpha$-H\"{o}lderian function.

\begin{definition}[H\"{o}lderian function of exponent $\alpha$ \cite{Cresson:2011}]
Let $f\in C^0\left(I, \mathbb{R}^{d}\right)$. We say that $f$ is
$\alpha$-H\"{o}lderian, $0<\alpha<1$, if for all $\epsilon>0$ and
all $t$, $t'\in I$ there exists a constant $c>0$ such that
$|t-t'|\leqslant\epsilon$ implies
$\|f(t)-f(t')\|\leqslant c\epsilon^{\alpha}$,
where $\|\cdot\|$ is a norm in $\mathbb{R}^{d}$.
The set of  H\"{o}lderian functions of H\"{o}lder exponent $\alpha$,
for some $\alpha$, is denoted by $H^\alpha(I,\mathbb{R}^{d})$.
\end{definition}

In what follows, we frequently use $\square$ to denote the
scale derivative operator $\frac{{\square}}{{\square}t}$.

\begin{theorem}[The quantum Leibniz rule \cite{Cresson:2011}]
\label{theo:mult}
Let $\alpha+\beta>1$. For $f\in H^\alpha\left(I,
\mathbb{R}^{d}\right)$ and $g\in H^\beta\left(I,
\mathbb{R}^{d}\right)$, one has
\begin{equation}
\label{eq:mult}
\square(f\cdot g)(t)=\square f(t) \cdot
g(t)+f(t)\cdot\square g(t)\,.
\end{equation}
\end{theorem}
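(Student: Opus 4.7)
The natural plan is to prove the identity first at the level of the $\epsilon$-scale derivative, with an explicit error term, and then project via $\langle \cdot \rangle$. I would start from the telescoping identity
\begin{equation*}
f(t+\sigma\epsilon)g(t+\sigma\epsilon)-f(t)g(t)
=\bigl[f(t+\sigma\epsilon)-f(t)\bigr]g(t)
+f(t)\bigl[g(t+\sigma\epsilon)-g(t)\bigr]
+\bigl[f(t+\sigma\epsilon)-f(t)\bigr]\bigl[g(t+\sigma\epsilon)-g(t)\bigr],
\end{equation*}
for $\sigma=\pm$, which immediately yields
\begin{equation*}
\Delta_{\epsilon}^{\sigma}(f\cdot g)(t)
=\Delta_{\epsilon}^{\sigma}f(t)\cdot g(t)+f(t)\cdot\Delta_{\epsilon}^{\sigma}g(t)
+R_{\epsilon}^{\sigma}(t),
\end{equation*}
with remainder $R_{\epsilon}^{\sigma}(t)=\epsilon^{-1}[f(t+\sigma\epsilon)-f(t)][g(t+\sigma\epsilon)-g(t)]$.

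The crucial step is to bound $R_{\epsilon}^{\sigma}$. Using the H\"{o}lder hypothesis, $\|f(t+\sigma\epsilon)-f(t)\|\leq c_{f}\epsilon^{\alpha}$ and $\|g(t+\sigma\epsilon)-g(t)\|\leq c_{g}\epsilon^{\beta}$, hence
\begin{equation*}
\|R_{\epsilon}^{\sigma}(t)\|\leq c_{f}c_{g}\,\epsilon^{\alpha+\beta-1}.
\end{equation*}
Since $\alpha+\beta>1$ by assumption, $R_{\epsilon}^{\sigma}(t)\to 0$ as $\epsilon\to 0^{+}$, and moreover the convergence is uniform in $t$ on any compact subinterval. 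Consequently $R_{\epsilon}^{\sigma}$, viewed as an element of $C^{0}(I\times(0,1),\mathbb{R}^{d})$, lies in $C^{0}_{\mathrm{conv}}$ and satisfies $\pi(R_{\epsilon}^{\sigma})=R_{\epsilon}^{\sigma}$ with $\langle R_{\epsilon}^{\sigma}\rangle\equiv 0$.

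Next I would assemble the $\epsilon$-scale derivative using Definition~\ref{def:qd}: linearly combining the two identities for $\sigma=+$ and $\sigma=-$ with coefficients $\tfrac{1}{2}(1\pm i\mu)$, one gets
\begin{equation*}
\frac{\square_{\epsilon}(f\cdot g)}{\square t}(t)
=\frac{\square_{\epsilon}f}{\square t}(t)\cdot g(t)+f(t)\cdot\frac{\square_{\epsilon}g}{\square t}(t)
+\widetilde{R}_{\epsilon}(t),
\end{equation*}
where $\widetilde{R}_{\epsilon}=\tfrac{1}{2}[(1+i\mu)R_{\epsilon}^{+}+(1-i\mu)R_{\epsilon}^{-}]$ still satisfies $\|\widetilde{R}_{\epsilon}(t)\|=O(\epsilon^{\alpha+\beta-1})\to 0$.

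Finally I would apply the operator $\langle\cdot\rangle$ to both sides, use its linearity together with $\langle\widetilde{R}_{\epsilon}\rangle=0$, and conclude \eqref{eq:mult}. The main obstacle, and the reason for the H\"{o}lder hypothesis, is the control of the quadratic cross term $R_{\epsilon}^{\sigma}$: without the condition $\alpha+\beta>1$ the factor $\epsilon^{\alpha+\beta-1}$ does not vanish and the Leibniz rule acquires an extra, genuinely quantum, contribution; verifying that the remainder really falls inside the convergent component $C^{0}_{\mathrm{conv}}$ (so that $\pi$ does not discard a spurious piece) is the only subtlety in the projection step.
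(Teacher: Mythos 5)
The paper does not prove this statement: Theorem~\ref{theo:mult} is quoted from \cite{Cresson:2011} as part of the background review in Section~\ref{sec:2}, so there is no in-paper proof to compare against. Your argument is the standard one for results of this type and is correct in substance: telescope the product increment, isolate the quadratic cross term, bound it by $c_f c_g\,\epsilon^{\alpha+\beta-1}$ using the H\"{o}lder hypotheses, and observe that $\alpha+\beta>1$ sends it into the kernel of $\left<\cdot\right>$. Two small remarks. First, a sign slip: for $\sigma=-$ the remainder in $\Delta^{-}_{\epsilon}(f\cdot g)=\Delta^{-}_{\epsilon}f\cdot g+f\cdot\Delta^{-}_{\epsilon}g+R^{-}_{\epsilon}$ is $(\sigma\epsilon)^{-1}\bigl[f(t-\epsilon)-f(t)\bigr]\bigl[g(t-\epsilon)-g(t)\bigr]$, i.e.\ it carries a minus sign relative to what you wrote; this is immaterial, since only the bound $O(\epsilon^{\alpha+\beta-1})$ is used. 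Second, the projection step has one more subtlety beyond the one you flag: passing from $\left<\square_{\epsilon}f\cdot g\right>$ to $\left<\square_{\epsilon}f\right>\cdot g$ requires that multiplication by the fixed, $\epsilon$-independent continuous function $g$ commute with $\pi$, i.e.\ that the complement $E$ be stable under such multiplication. This is a property of the particular choice of $E$ in Cresson and Greff's construction rather than a consequence of $\pi$ being merely ``a'' projection onto ${C^0_{conv}}$, and it should be invoked explicitly; with that caveat, your proof is complete and matches the intended argument.
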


\begin{remark}
For $f\in \mathcal{C}^1\left(I, \mathbb{R}^{d}\right)$ and $g\in
\mathcal{C}^1\left(I, \mathbb{R}^{d}\right)$, one obtains from
\eqref{eq:mult} the classical Leibniz rule: $(f\cdot g)'=f'\cdot
g+f\cdot g'$. For convenience of notation, we sometimes write
\eqref{eq:mult} as $(f\cdot g)^\square(t)= f^\square(t)
\cdot g(t)+f(t)\cdot g^\square(t)$.
\end{remark}

\begin{theorem}[The quantum Barrow rule \cite{Cresson:2011}]
\label{Barrow} Let $f\in \mathcal{C}^0([t_1,t_2],\mathbb{R})$ be
such that $\Box f / \Box t$ is continuous and
\begin{equation}
\label{nec_condition}
\lim_{\epsilon\to0} \int_{t_{1}}^{t_{2}}
\left(\frac{\Box_\epsilon f}{\Box t}\right)_E(t)dt=0.
\end{equation}
Then,
\begin{equation}
\label{Barrow1}
\int^{t_{2}}_{t_{1}} \frac{\Box f}{\Box t}(t)\, dt=f(b)-f(a)\,.
\end{equation}
\end{theorem}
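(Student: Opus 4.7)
The plan is to apply the classical fundamental theorem of calculus at fixed $\epsilon>0$ to each one-sided quantum derivative, and only afterwards pass to the limit through the projection $\langle\cdot\rangle$. The whole argument rests on the remark following the definition of $\Delta_{\epsilon}^{\pm}$: for continuous $f$ the $\epsilon$-mean $f_{\epsilon}^{\sigma}(t)=\frac{\sigma}{\epsilon}\int_{t}^{t+\sigma\epsilon}f(s)\,ds$ is of class $C^{1}$ and $\Delta_{\epsilon}^{\sigma}f=(f_{\epsilon}^{\sigma})'$ in the ordinary sense. Therefore, at fixed $\epsilon$, the usual Barrow rule gives
\[
\int_{t_{1}}^{t_{2}} \Delta_{\epsilon}^{\pm}f(t)\,dt = f_{\epsilon}^{\pm}(t_{2}) - f_{\epsilon}^{\pm}(t_{1}),
\]
and the continuity of $f$ forces $f_{\epsilon}^{\pm}(t)\to f(t)$ as $\epsilon\to 0$.

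First, I substitute Definition~\ref{def:qd} into the integral and treat the four resulting pieces separately. The symmetric part $\tfrac{1}{2}(\Delta_{\epsilon}^{+}f+\Delta_{\epsilon}^{-}f)$ integrates to $\tfrac{1}{2}\bigl[f_{\epsilon}^{+}(t_{2})-f_{\epsilon}^{+}(t_{1})\bigr]+\tfrac{1}{2}\bigl[f_{\epsilon}^{-}(t_{2})-f_{\epsilon}^{-}(t_{1})\bigr]$, which tends to $f(t_{2})-f(t_{1})$. The antisymmetric part $\tfrac{i\mu}{2}(\Delta_{\epsilon}^{+}f-\Delta_{\epsilon}^{-}f)$ produces $\tfrac{i\mu}{2}\bigl[(f_{\epsilon}^{+}-f_{\epsilon}^{-})(t_{2})-(f_{\epsilon}^{+}-f_{\epsilon}^{-})(t_{1})\bigr]$, which vanishes in the same limit. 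Combining,
\[
\lim_{\epsilon\to 0}\int_{t_{1}}^{t_{2}}\frac{\Box_{\epsilon} f}{\Box t}(t)\,dt = f(t_{2})-f(t_{1}).
\]

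Next, I invoke the canonical decomposition $\Box_{\epsilon}f/\Box t = \pi(\Box_{\epsilon}f/\Box t) + (\Box_{\epsilon}f/\Box t)_{E}$ associated to the splitting $C^{0}_{conv}\oplus E$. Integrating on $[t_{1},t_{2}]$ and letting $\epsilon\to 0$, the hypothesis \eqref{nec_condition} annihilates the $E$-piece, so
\[
\lim_{\epsilon\to 0}\int_{t_{1}}^{t_{2}}\pi\!\left(\frac{\Box_{\epsilon}f}{\Box t}\right)\!(t)\,dt = f(t_{2})-f(t_{1}).
\]
By definition of $\langle\cdot\rangle$, the integrand converges pointwise to $\Box f/\Box t$, which is continuous by hypothesis. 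The conclusion \eqref{Barrow1} will follow once I can interchange this limit with the integral.

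The main obstacle is precisely that interchange. Cresson's framework delivers only pointwise convergence of $\pi(\Box_{\epsilon}f/\Box t)$ to $\Box f/\Box t$, together with continuity of the limit on the compact set $[t_{1},t_{2}]$; to justify $\lim\!\int=\int\!\lim$ I expect to appeal to a bounded (dominated) convergence argument, using the fact that the $\epsilon$-means $f_{\epsilon}^{\pm}$, and hence the combinations $\Delta_{\epsilon}^{\pm}f$ underlying $\Box_{\epsilon}f/\Box t$, inherit uniform bounds on $[t_{1},t_{2}]$ from the continuity of $f$ on a neighbourhood of the interval, so that $\pi(\Box_{\epsilon}f/\Box t)$ remains uniformly bounded as $\epsilon\to 0$. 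Once this exchange is secured, the two displayed identities coincide and yield $\int_{t_{1}}^{t_{2}}\Box f/\Box t\,dt=f(t_{2})-f(t_{1})$, as required.
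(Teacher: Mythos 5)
First, note that the paper itself offers no proof of this theorem: it is recalled from \cite{Cresson:2011} as background in Section~\ref{sec:2}, so there is no in-paper argument to compare yours against. Judged on its own terms, your proposal follows the natural route and most of it is sound: the identification $\Delta_{\epsilon}^{\sigma}f=(f_{\epsilon}^{\sigma})'$, the fixed-$\epsilon$ fundamental theorem of calculus giving $\lim_{\epsilon\to0}\int_{t_{1}}^{t_{2}}\frac{\Box_{\epsilon}f}{\Box t}(t)\,dt=f(t_{2})-f(t_{1})$, and the splitting $\frac{\Box_{\epsilon}f}{\Box t}=\pi\left(\frac{\Box_{\epsilon}f}{\Box t}\right)+\left(\frac{\Box_{\epsilon}f}{\Box t}\right)_{E}$ combined with hypothesis \eqref{nec_condition} are all correct. (Incidentally, the right-hand side of \eqref{Barrow1} should read $f(t_{2})-f(t_{1})$, not $f(b)-f(a)$.)

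The gap is in your final paragraph. You propose to justify the interchange $\lim_{\epsilon\to0}\int=\int\lim_{\epsilon\to0}$ by dominated convergence, claiming that the $\epsilon$-means $f_{\epsilon}^{\pm}$ ``and hence'' the quotients $\Delta_{\epsilon}^{\pm}f$ are uniformly bounded. The first half is true ($\|f_{\epsilon}^{\pm}\|_{\infty}\leq\sup|f|$ on a neighbourhood of $[t_{1},t_{2}]$), but the ``hence'' fails: $\Delta_{\epsilon}^{\pm}f$ are the \emph{derivatives} of the means, not the means themselves, and for the functions this calculus is designed for --- merely continuous, typically $\alpha$-H\"{o}lderian with $\alpha<1$ --- one has $\sup_{t}|\Delta_{\epsilon}^{+}f(t)|$ of order $\epsilon^{\alpha-1}\to\infty$. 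A uniform bound on $\Delta_{\epsilon}^{\pm}f$ as $\epsilon\to0$ would force $f$ to be Lipschitz, defeating the purpose of the theorem. Moreover, even a bound on $\frac{\Box_{\epsilon}f}{\Box t}$ would not automatically transfer to $\pi\left(\frac{\Box_{\epsilon}f}{\Box t}\right)$, since the algebraic projection onto $C^{0}_{conv}$ along $E$ carries no continuity in the sup norm. So the crux you correctly isolate --- passing the limit inside the integral over the $\pi$-part, whose convergence to the continuous function $\frac{\Box f}{\Box t}$ is a priori only pointwise --- remains unjustified as written; it needs an argument (for instance, uniformity of that convergence on the compact interval) that your proposed domination does not supply.
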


The next theorem gives the analogous of the derivative
of a composite function for the quantum derivative.

\begin{theorem}[See \cite{Cresson:2011}]
\label{theo:derivada}
Let $f \in C^{2}\left({\mathbb{R}^d\times I,\mathbb{R}}\right)$
and $x\in H^{\alpha}(\mathbb{R}^d,I)$ with $\frac{1}{2}\leq\alpha< 1$. Then,
\begin{multline*}
\frac{{\square}f}{{\square}t}(x(t),t)=\frac{\partial
f}{\partial t}(x(t),t)+\nabla_xf(x(t),t)\cdot\nabla_{\square}x(t) \\
 +\sum^{d}_{k=1}\sum^{d}_{j=1}\frac{1}{2}\frac{\partial
^{2}f}{\partial
x_kx_{j}}\left(x(t),t\right)a_{k,j}(x(t)),
\end{multline*}
where
\begin{equation*}
\nabla_{\square}x(t)=\left(\frac{\square x_1}{\square t}(t),
\ldots,\frac{\square x_n}{\square t}(t)\right)^T
\end{equation*}
and $a_{k,j}(x(t))$ denotes
$$
\left< \pi\left( \frac{\epsilon}{2}\left(
\left(\Delta^{+}_{\epsilon}x_k(t)\right)\left(\Delta^{+}_{\epsilon}x_k(t)\right)
(1+i\mu)-\left(\Delta^{-}_{\epsilon}x_k(t)\right)\left(\Delta^{-}_{\epsilon}x_k(t)\right)
(1-i\mu)\right)\right)\right>.
$$
\end{theorem}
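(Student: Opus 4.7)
The plan is to compute $\frac{\square_\epsilon g}{\square t}$ for $g(t) := f(x(t), t)$ directly from the definition of the $\epsilon$-scale derivative, and then pass to the limit via the $\pi$ and $\langle \cdot \rangle$ operators. Since $f$ is $C^2$, the central tool is a second-order Taylor expansion of $f$ around $(x(t), t)$. Setting $\Delta_+ x := x(t+\epsilon) - x(t) = \epsilon\,\Delta^+_\epsilon x(t)$ and $\Delta_- x := x(t) - x(t-\epsilon) = \epsilon\,\Delta^-_\epsilon x(t)$, Taylor's theorem gives
\begin{align*}
g(t+\epsilon) - g(t) &= \tfrac{\partial f}{\partial t}\,\epsilon + \nabla_x f \cdot \Delta_+ x + \tfrac{1}{2}\sum_{k,j}\tfrac{\partial^2 f}{\partial x_k \partial x_j}(\Delta_+ x_k)(\Delta_+ x_j) + R_+,\\
g(t) - g(t-\epsilon) &= \tfrac{\partial f}{\partial t}\,\epsilon + \nabla_x f \cdot \Delta_- x - \tfrac{1}{2}\sum_{k,j}\tfrac{\partial^2 f}{\partial x_k \partial x_j}(\Delta_- x_k)(\Delta_- x_j) + R_-,
\end{align*}
where $R_\pm$ collect the mixed $\epsilon\cdot\Delta_\pm x$ terms, the $\epsilon^2$ term, and cubic-and-higher contributions in $\Delta_\pm x$.

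Next I would divide by $\epsilon$ and apply Definition~\ref{def:qd}. The first-order pieces reorganize as $\nabla_x f \cdot \tfrac{\square_\epsilon x}{\square t}$, while the $\tfrac{\partial f}{\partial t}\epsilon/\epsilon$ contributions survive with coefficient $1$ in the symmetric half of the combination and cancel in the antisymmetric half, yielding $\tfrac{\partial f}{\partial t}(x(t), t)$. Taking into account the sign flip between the forward and backward quadratic contributions, the second-order part assembles into
\[
\tfrac{1}{2}\sum_{k,j}\tfrac{\partial^2 f}{\partial x_k \partial x_j}\cdot\tfrac{\epsilon}{2}\bigl[(1+i\mu)(\Delta^+_\epsilon x_k)(\Delta^+_\epsilon x_j) - (1-i\mu)(\Delta^-_\epsilon x_k)(\Delta^-_\epsilon x_j)\bigr],
\]
which, after passing through $\pi$ and $\langle\cdot\rangle$, is exactly $\tfrac{1}{2}\sum \tfrac{\partial^2 f}{\partial x_k \partial x_j}\,a_{k,j}(x(t))$ as defined in the statement.

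The main obstacle will be justifying that the remainders $R_\pm/\epsilon$ either vanish in the limit or lie in the complement $E$ killed by $\pi$. This is where the Hölder condition $\alpha \geq 1/2$ is decisive: from $\|\Delta_\pm x\| \leq c\,\epsilon^\alpha$ one gets $\epsilon\cdot\Delta_\pm x/\epsilon = O(\epsilon^\alpha) \to 0$ and $\|\Delta_\pm x\|^3/\epsilon = O(\epsilon^{3\alpha - 1}) \to 0$, so these contributions lie in $C^0_{conv}$ with zero limit and disappear under $\langle\cdot\rangle$. The quadratic combination $\epsilon(\Delta^\pm_\epsilon x_k)(\Delta^\pm_\epsilon x_j) = O(\epsilon^{2\alpha - 1})$ is only bounded (not convergent in general), so the role of $\pi$ is precisely to extract its convergent projection, which is how $a_{k,j}$ is defined. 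Careful bookkeeping of which pieces belong to $C^0_{conv}$ and which to $E$ is the only delicate point; the rest is algebraic rearrangement of Taylor's theorem combined with the definition of the scale derivative.
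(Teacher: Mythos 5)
The paper itself gives no proof of Theorem~\ref{theo:derivada}: it is quoted directly from Cresson and Greff \cite{Cresson:2011}. Your sketch reproduces the argument of that reference correctly — second-order Taylor expansion of $f(x(t\pm\epsilon),t\pm\epsilon)$, division by $\epsilon$, assembly into the combination $\tfrac{1}{2}[(\Delta^{+}+\Delta^{-})+i\mu(\Delta^{+}-\Delta^{-})]$ (with the correct sign flip on the backward quadratic term producing the $(1+i\mu)/-(1-i\mu)$ structure of $a_{k,j}$), and the observation that $\alpha\ge\tfrac{1}{2}$ makes the remainders $o(1)$ while the quadratic term is merely $O(\epsilon^{2\alpha-1})$ and must be handled by $\pi$ and $\langle\cdot\rangle$, which is exactly how $a_{k,j}$ is defined. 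The only minor imprecision is the mention of ``cubic'' Taylor contributions: since $f$ is only $C^{2}$, the correct remainder is the Peano term $o(\|(\Delta_{\pm}x,\epsilon)\|^{2})$, but the resulting bound $o(\epsilon^{2\alpha-1})=o(1)$ is the same.
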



\section{Main Results}
\label{sec:mr}

The classical Noether's theorem is valid along extremals $q$ which are $C^2$-differentiable.
The biggest class where a Noether type theorem has been proved for the classical
problem of the calculus of variations is the class of Lipschitz functions
\cite{CD:JMS:Torres:2004}. In this work we prove a more general
Noether type theorem, valid for nondifferentiable scale extremals.

In \cite{Cresson:2011} the calculus of variations with scale
derivatives is introduced and respective Euler--Lagrange equations
derived without the dependence of $\epsilon$.
In this section we obtain a formulation of Noether's
theorem for the scale calculus of variations. The proof of our
Noether's theorem is done in two steps: first we extend the
DuBois--Reymond condition to problems with scale derivatives
(Theorem~\ref{theo:cdrnd}); then, using this result, we obtain the
scale/quantum Noether's theorem (Theorem~\ref{theo:tnnd}).
The problem of the calculus of variations with scale derivatives is defined as
\begin{gather}
\label{Pe}
I[q(\cdot)] = \int_{a}^{b}
L\left(t,q(t),\square q(t)\right) dt \longrightarrow \min
\end{gather}
under given boundary conditions $q(a)=q_{a}$ and
$q(b)=q_{b}$, $(q(\cdot),\square q(\cdot)) \in
H^{2\alpha}$, $0<\alpha<1$. The Lagrangian $L
$ is assumed to be a $C^{1}$-function with
respect to all its arguments.

\begin{remark}
In the case of admissible differentiable functions $q(\cdot)$,
functional $I[q(\cdot)]$ in \eqref{Pe} reduces to the classical
variational functional of the fundamental problem
of the calculus of variations:
\begin{equation*}
I[q(\cdot)] = \int_a^b L\left(t,q(t),\dot{q}(t)\right) dt.
\end{equation*}
\end{remark}

\begin{theorem}[Nondifferentiable Euler--Lagrange equations \cite{Cresson:2011}]
\label{Thm:NonDtELeq}
Let $0<\alpha,\, \beta<1$ with $\alpha+\beta>1$.
If $q\in H^{\alpha}\left(I,\mathbb{R}^{d}\right)$
satisfies $\square q\in H^{\alpha}\left(I,
\mathbb{R}^{d}\right)$ and
$L\left(t,q(t),\square q(t)\right)\cdot h(t)$
satisfies condition \eqref{nec_condition} for all $h\in
H^{\beta}\left(I, \mathbb{R}^{d}\right)$, then function $q$
satisfies the following nondifferentiable Euler--Lagrange equation:
\begin{equation}
\label{eq:elnd}
\partial_{2} L\left(t,q(t),\square q(t)\right)-\square
\partial_{3} L\left(t,q(t),\square q(t)\right)=0\, .
\end{equation}
\end{theorem}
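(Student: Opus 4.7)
The plan is to adapt the classical Lagrange variational argument to the Hölder / scale setting, using the quantum Leibniz rule (Theorem~\ref{theo:mult}) in place of the classical product rule and the quantum Barrow rule (Theorem~\ref{Barrow}) in place of the classical fundamental theorem of calculus. Concretely, I would introduce a one‑parameter family of admissible perturbations $q_\lambda(t) = q(t) + \lambda h(t)$, where $h\in H^{\beta}\left(I,\mathbb{R}^{d}\right)$ satisfies $h(a)=h(b)=0$. By linearity of the scale derivative, $\square q_\lambda = \square q + \lambda\,\square h$, so $q_\lambda$ remains in the natural class and $\square q_\lambda \in H^{\alpha}$ when $\beta\geq \alpha$. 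Since $L$ is $C^{1}$, one can differentiate $\lambda\mapsto I[q_\lambda]$ under the integral and use the optimality of $q$ to obtain, at $\lambda = 0$, the first‑variation identity
\begin{equation*}
\int_{a}^{b}\Bigl[\partial_{2} L\bigl(t,q(t),\square q(t)\bigr)\cdot h(t)
+\partial_{3} L\bigl(t,q(t),\square q(t)\bigr)\cdot \square h(t)\Bigr]\, dt = 0
\end{equation*}
for every such $h$.

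The next step is to eliminate $\square h$ through integration by parts. Using Theorem~\ref{theo:mult}, which applies precisely because $\alpha + \beta > 1$, I would rewrite
\begin{equation*}
\partial_{3} L\bigl(t,q(t),\square q(t)\bigr)\cdot \square h(t)
= \square\!\left[\partial_{3} L\bigl(t,q(t),\square q(t)\bigr)\cdot h(t)\right]
- \square\partial_{3} L\bigl(t,q(t),\square q(t)\bigr)\cdot h(t).
\end{equation*}
Then, invoking the Barrow rule of Theorem~\ref{Barrow} — whose compatibility condition \eqref{nec_condition} is guaranteed by the hypothesis imposed on $L(t,q(t),\square q(t))\cdot h(t)$ — the integral of the total scale derivative collapses to the boundary values $\partial_{3} L\cdot h\bigm|_{a}^{b}$, which vanish because $h(a)=h(b)=0$. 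Substituting back leaves the single integrated identity
\begin{equation*}
\int_{a}^{b}\Bigl[\partial_{2} L\bigl(t,q(t),\square q(t)\bigr)
-\square\partial_{3} L\bigl(t,q(t),\square q(t)\bigr)\Bigr]\cdot h(t)\, dt = 0
\end{equation*}
valid for every admissible $h\in H^{\beta}$.

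The conclusion \eqref{eq:elnd} then follows by a fundamental‑lemma argument: since the bracket is continuous in $t$ (via the $C^{1}$ regularity of $L$ and the Hölder regularity of $q$ and $\square q$), and since Hölder bump functions supported in $[a,b]$ are rich enough to separate points, the pointwise vanishing of the bracket on $[a,b]$ is forced. The step I expect to be most delicate is the bookkeeping of Hölder exponents together with the condition \eqref{nec_condition}: one must ensure that $\partial_{3} L(t,q,\square q)\cdot h$ genuinely falls in the class where Barrow's rule applies, and that the product has combined Hölder exponent $>1$ so that the Leibniz rule is legal — this is the point where the assumption $\alpha+\beta>1$ and the hypothesis on condition \eqref{nec_condition} are used in an essential and non‑negotiable way. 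A secondary technical point is verifying that the nondifferentiable fundamental lemma admits test functions of Hölder regularity $\beta$ rather than merely smooth ones, but this is standard once the continuity of the Euler expression is in hand.
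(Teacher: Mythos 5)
This theorem is quoted in the paper without proof --- it is imported verbatim from \cite{Cresson:2011} --- so there is no in-paper argument to compare against; your sketch is the standard first-variation proof and is essentially the one given in that reference (perturbation $q+\lambda h$, quantum Leibniz rule to integrate by parts, quantum Barrow rule to kill the boundary term, fundamental lemma with H\"{o}lder test functions). Two small points worth noting: the statement as printed omits the hypothesis that $q$ is a critical point of \eqref{Pe}, which you correctly reinstate when you invoke ``optimality of $q$''; and the one step that genuinely needs more care than you give it is the applicability of Theorem~\ref{theo:mult} to the product $\partial_{3}L\left(t,q(t),\square q(t)\right)\cdot h(t)$, since with $L$ merely $C^{1}$ the composition $t\mapsto\partial_{3}L\left(t,q(t),\square q(t)\right)$ need not inherit a H\"{o}lder exponent pairing with $\beta$ to exceed $1$ --- one implicitly needs $\partial_{3}L$ Lipschitz in its arguments (and it is this product, rather than $L\cdot h$ itself, whose $E$-part must satisfy \eqref{nec_condition} for the Barrow step).
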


It is worth to mention that the Euler--Lagrange equation
\eqref{eq:elnd} can be generalized in many different ways:
see \cite{MyID:162} for the cases when the Lagrangian $L$
contains multiple scale derivatives, depends on a parameter,
or contains higher-order scale derivatives.

\begin{definition}[Nondifferentiable extremals]
\label{def:scale:ext}
The solutions $q(\cdot)$ of the nondifferentiable Euler--Lagrange equation
\eqref{eq:elnd} are called \emph{nondifferentiable extremals}.
\end{definition}

\begin{definition}
\label{def:invnd}
Functional \eqref{Pe} is said to be invariant under
the $s$-parameter group of infinitesimal transformations
\begin{equation}
\label{eq:tinf}
\begin{cases}
\bar{t} = t + s\tau(t,q) + o(s) \, ,\\
\bar{q}(t) = q(t) + s\xi(t,q) + o(s) \, ,\\
\end{cases}
\end{equation}
if
\begin{multline}
\label{eq:invnd}
0 = \frac{d}{ds}
\int_{\bar{t}(I)} L\bigg[t+s\tau(t,q(t)),q(t)+s\xi(t,q(t)),\\
\frac{\square q(t)+s\square{\xi}(t,q(t))}{1+s\square{\tau}(t,q(t))}\bigg]
\left(1+s\square{\tau}(t,q(t))\right)dt\Bigr|_{s=0}
\end{multline}
for any  subinterval $I \subseteq [a,b]$, where $\tau,\xi\in H^{\alpha}$.
\end{definition}

Lemma~\ref{thm:CNSI:SCV} establishes a necessary
condition of invariance for \eqref{Pe}. Condition \eqref{eq:cnsind}
will be used in the proof of our Noether type theorem.

\begin{lemma}[Necessary condition of invariance]
\label{thm:CNSI:SCV}
If functional \eqref{Pe} is invariant under the
one-parameter group of transformations \eqref{eq:tinf}, then
\begin{multline}
\label{eq:cnsind} \int_{t_{a}}^{t_{b}}\Bigl[\partial_{1}
L\left(t,q(t),\square q(t)\right)\tau +\partial_{2}
L\left(t,q(t),\square q(t)\right)\cdot\xi
\\
+\partial_{3}
L\left(t,q(t),\square q(t)\right)\cdot\left(\square\xi
-\square q(t)\square\tau\right)+L\left(t,q(t),\square q(t)\right)\square\tau\Bigr]dt
 = 0\,.
\end{multline}
\end{lemma}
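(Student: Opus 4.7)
The plan is to read \eqref{eq:cnsind} as the statement that $\frac{d}{ds}\big|_{s=0}$ of the transformed action in \eqref{eq:invnd} vanishes, and to obtain it by differentiating under the integral sign and collecting terms via the chain rule. Taking the subinterval $I=[t_a,t_b]$ in Definition~\ref{def:invnd}, the image interval $\bar t(I)$ reduces to $[t_a,t_b]$ when $s=0$, so no boundary contribution appears at the evaluation point and only the $s$-derivative of the integrand survives.

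Concretely, I would write the integrand as $L(u_1(s),u_2(s),u_3(s))(1+s\square\tau)$ with
\[
u_1 = t+s\tau(t,q(t)),\quad u_2 = q(t)+s\xi(t,q(t)),\quad u_3 = \frac{\square q(t)+s\square\xi(t,q(t))}{1+s\square\tau(t,q(t))}.
\]
At $s=0$ one has $u_1=t$, $u_2=q(t)$, $u_3=\square q(t)$, together with
\[
\left.\frac{\partial u_1}{\partial s}\right|_{s=0}=\tau,\qquad
\left.\frac{\partial u_2}{\partial s}\right|_{s=0}=\xi,\qquad
\left.\frac{\partial u_3}{\partial s}\right|_{s=0}=\square\xi-\square q(t)\,\square\tau,
\]
where the last derivative follows from the quotient rule applied to $u_3$. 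Applying the Leibniz rule to the product with the factor $(1+s\square\tau)$, whose value at $s=0$ is $1$ and whose $s$-derivative at $s=0$ is $\square\tau$, gives the pointwise identity
\[
\left.\tfrac{d}{ds}\right|_{s=0}\!\bigl[L(u_1,u_2,u_3)(1+s\square\tau)\bigr]
= \partial_{1}L\,\tau+\partial_{2}L\cdot\xi+\partial_{3}L\cdot(\square\xi-\square q\,\square\tau)+L\,\square\tau,
\]
with all partials of $L$ evaluated at $(t,q(t),\square q(t))$. Integrating over $[t_a,t_b]$ and invoking \eqref{eq:invnd} yields \eqref{eq:cnsind}.

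The only step requiring any care is the passage of $\frac{d}{ds}$ inside the integral. This is justified by the hypothesis that $L$ is of class $C^{1}$ and that $\tau,\xi\in H^{\alpha}$, which together ensure that the integrand in \eqref{eq:invnd} is continuously differentiable in $s$ with a local bound uniform in $t\in[t_a,t_b]$ for $|s|$ small, so that differentiation under the integral (or equivalently dominated convergence applied to the difference quotient) is legitimate. The remaining computation is then the routine chain-rule/quotient-rule bookkeeping above, and this is the main technical point of the argument.
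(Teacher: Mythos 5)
Your proposal is correct and follows exactly the paper's route: the paper's proof of Lemma~\ref{thm:CNSI:SCV} consists of the single remark that \eqref{eq:cnsind} ``follows directly'' from \eqref{eq:invnd} with $I=[t_a,t_b]$, and your chain-rule/quotient-rule computation of the $s$-derivative of the integrand at $s=0$ is precisely the calculation the paper omits. The one caveat is your justification for discarding boundary terms --- the endpoints of $\bar t(I)$ have nonzero $s$-derivative even though they equal $t_a$, $t_b$ at $s=0$ --- but the intended reading, consistent with the Jacobian factor $\left(1+s\square\tau\right)$ already present in \eqref{eq:invnd}, is that the change of variables has been performed and the domain of integration is the fixed interval $[t_a,t_b]$, so your conclusion stands.
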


\begin{proof}
Without loss of generality, we take $I=[t_a,t_b]$.
Equality \eqref{eq:cnsind} follows directly from
condition \eqref{eq:invnd}.
\end{proof}

\begin{definition}[Nondifferentiable constants of motion]
\label{def:leicond}
A quantity $C(t,q(t),\square q(t))$
is a \emph{nondifferentiable constant of motion}
if $C(t,q(t),\square q(t))$ is constant along all the
nondifferentiable extremals $q(\cdot) \in H^\alpha(I,\mathbb{R}^d)$,
$\frac{1}{2}\leq\alpha<1$ (\textrm{cf.} Definition~\ref{def:scale:ext}).
\end{definition}

Theorem~\ref{theo:cdrnd} generalizes the classical DuBois--Reymond optimality condition
\begin{equation*}
\partial_{1} L\left(t,q(t),\dot{q}(t)\right)
=\frac{d}{dt}\left\{L\left(t,q(t),\dot{q}(t)\right)
-\partial_{3} L\left(t,q(t),\dot{q}(t)\right)\cdot\dot{q}(t)\right\}
\end{equation*}
for Cresson's quantum problems of the calculus of variations.

\begin{theorem}[Nondifferentiable DuBois--Reymond necessary optimality condition]
\label{theo:cdrnd}
Let $\frac{1}{2}\leq\alpha<1\,.$ If $q\in H^{\alpha}\left(I,\mathbb{R}^{d}\right)$
with $\square q\in H^{\alpha}\left(I,\mathbb{R}^{d}\right)$,
then any nondifferentiable extremal $q$ satisfies
the following DuBois--Reymond necessary condition:
\begin{equation}
\label{eq:cdrnd}
\frac{\square}{\square
t}\left\{L\left(t,q,\frac{\square q}{\square
t}\right)-\partial_{3} L\left(t,q,\frac{\square q}{\square
t}\right)\cdot\frac{\square q}{\square t}\right\} =
\partial_{1} L\left(t,q,\frac{\square q}{\square
t}\right)\,.
\end{equation}
\end{theorem}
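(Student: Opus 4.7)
The plan is to mirror the classical derivation of the DuBois--Reymond condition, replacing ordinary differentiation by the quantum operator $\square$ at each step. I would start from the left-hand side of \eqref{eq:cdrnd} and, applying the quantum Leibniz rule of Theorem~\ref{theo:mult} to the product $\partial_{3}L\cdot\square q$, rewrite it as
\[
\square L(t,q,\square q)-\square\partial_{3}L(t,q,\square q)\cdot\square q-\partial_{3}L(t,q,\square q)\cdot\square\square q.
\]
The regularity hypothesis $\alpha\ge 1/2$, combined with $q,\square q\in H^{\alpha}$, guarantees that the two factors in each Leibniz application have H\"{o}lder exponents summing to $2\alpha\ge 1$, so the $\alpha+\beta>1$ requirement of Theorem~\ref{theo:mult} is met (one has to treat the limit case $2\alpha=1$ by a standard density/approximation argument, or restrict to strict inequality).

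Next I would expand $\square L(t,q(t),\square q(t))$ via the quantum chain rule (Theorem~\ref{theo:derivada}), taking the composite spatial variable to be $(q,\square q)\in\mathbb{R}^{2d}$. This yields an identity of the form
\[
\square L=\partial_{1}L+\partial_{2}L\cdot\square q+\partial_{3}L\cdot\square\square q+\mathcal{R},
\]
where $\mathcal{R}$ collects the quadratic $a_{k,j}$-corrections produced by Theorem~\ref{theo:derivada}. Substituting into the Leibniz expansion above, the terms $\partial_{3}L\cdot\square\square q$ cancel, leaving
\[
\partial_{1}L+\bigl(\partial_{2}L-\square\partial_{3}L\bigr)\cdot\square q+\mathcal{R}.
\]
Along any nondifferentiable extremal, the bracketed factor vanishes by the Euler--Lagrange equation \eqref{eq:elnd} of Theorem~\ref{Thm:NonDtELeq}, so \eqref{eq:cdrnd} follows provided the residue $\mathcal{R}$ also vanishes.

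The principal obstacle is precisely this control of $\mathcal{R}$: the quadratic corrections in Theorem~\ref{theo:derivada} reflect the genuine nondifferentiability of $q$ and do not automatically disappear. The expected mechanism is symmetry of the expansion when the derivative $\square q$ is adjoined as an extra spatial coordinate, so that the second-order contributions attached to the variables $q$ and $\square q$ enter the combination $L-\partial_{3}L\cdot\square q$ with opposite signs and cancel pairwise in the projection $\langle\cdot\rangle$; making this cancellation rigorous, under the precise H\"{o}lder regularity $\alpha\ge 1/2$ assumed, is the delicate step. A secondary technical point is to verify that the compositions $t\mapsto\partial_{i}L(t,q(t),\square q(t))$ for $i=1,2,3$ inherit enough H\"{o}lderian regularity for the Leibniz and chain rules to apply, and that the projection $\pi$ and the operator $\langle\cdot\rangle$ commute with the algebraic manipulations performed on the $\epsilon$-scale derivatives before passing to the limit; both are implicit in the machinery of \cite{Cresson:2011} but must be invoked explicitly at the right places.
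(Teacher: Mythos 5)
Your proposal follows essentially the same route as the paper's own proof: apply the quantum Leibniz rule (Theorem~\ref{theo:mult}) to the product $\partial_{3}L\cdot\square q$, expand $\square L(t,q,\square q)$ by the quantum chain rule (Theorem~\ref{theo:derivada}), cancel the two occurrences of $\partial_{3}L\cdot\square\square q$, and eliminate the remaining bracket $\partial_{2}L-\square\partial_{3}L$ by the Euler--Lagrange equation \eqref{eq:elnd}. The only point of divergence is the residue $\mathcal{R}$ that you isolate. There you have put your finger on something real: Theorem~\ref{theo:derivada} does produce the second-order terms $\tfrac{1}{2}\sum_{k,j}\partial^{2}_{x_k x_j}L\,a_{k,j}$, and they must be disposed of before \eqref{eq:cdrnd} follows. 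The paper's proof simply writes $\square L=\partial_{1}L+\partial_{2}L\cdot\square q+\partial_{3}L\cdot\square\square q$ with no correction term at all, i.e., it takes $\mathcal{R}=0$ tacitly and without comment. The standard justification is that for an $H^{\alpha}$ function one has $\epsilon\bigl(\Delta_{\epsilon}^{\pm}q\bigr)^{2}=O(\epsilon^{2\alpha-1})$, which tends to $0$ when $\alpha>\tfrac{1}{2}$, so $a_{k,j}\equiv 0$ and $\mathcal{R}$ vanishes outright --- no pairwise cancellation in the combination $L-\partial_{3}L\cdot\square q$ is needed, and the mechanism you conjecture is not the one at work. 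At the boundary case $\alpha=\tfrac{1}{2}$, which the hypotheses of the theorem permit and which is precisely the regime relevant to the Schr\"{o}dinger application of Section~\ref{sec:appl}, the $a_{k,j}$ need not vanish and neither your argument nor the paper's closes the gap. A further shared technical wrinkle is that Theorem~\ref{theo:derivada} asks for $L\in C^{2}$ while the problem \eqref{Pe} only assumes $L\in C^{1}$. In short: same approach as the paper; the step you flag as delicate is handled in the paper by silent omission, is easily repaired for $\alpha>\tfrac{1}{2}$, and remains genuinely open at $\alpha=\tfrac{1}{2}$ in both treatments.
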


\begin{proof}
Using the linearity of the quantum derivative operator,
Theorems~\ref{theo:mult} and \ref{theo:derivada},
and the nondifferentiable Euler--Lagrange equation \eqref{eq:elnd},
we can write that
\begin{equation*}
\begin{split}
\square\Bigl\{L(t,&q,\square q)-\partial_{3}
L(t,q,\square q)\cdot\square q\Bigr\} \\
&=\partial_{1}L(t,q,\square q)
+\partial_{2}L(t,q,\square q)\cdot\square q
+\partial_{3}L(t,q,\square q)\cdot\square\square q \\
&\qquad-\square\partial_{3}L(t,q,\square q)\cdot\square q
-\partial_{3}L(t,q,\square q)\cdot\square\square q \\
&=\partial_{1}L(t,q,\square q)+\square q\cdot(\partial_{2}L(t,q,\square q)
-\square\partial_{3}L(t,q,\square q)) \\
&=\partial_{1}L(t,q,\square q) \, .
\end{split}
\end{equation*}
This concludes the proof.
\end{proof}

Our main result is the following.

\begin{theorem}[Nondifferentiable Noether's theorem]
\label{theo:tnnd}
If functional \eqref{Pe} is invariant in the sense
of Definition~\ref{def:invnd}, then
\begin{multline}
\label{eq:tnnd} C(t,q(t),\square q(t)) =
\partial_{3}L(t,q,\square q))\cdot\xi(t,q)\\
+\Bigl(L(t,q,\square q)-\partial_{3}
L(t,q,\square q)\cdot\square q\Bigr)\tau(t,q)
\end{multline}
is a nondifferentiable constant of motion (\textrm{cf.}
Definition~\ref{def:leicond}).
\end{theorem}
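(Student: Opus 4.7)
The plan is to derive a pointwise identity from the invariance hypothesis and then repackage the resulting four terms as the quantum derivative of the candidate constant $C$, using the Euler--Lagrange equation (Theorem~\ref{Thm:NonDtELeq}) and the new DuBois--Reymond condition (Theorem~\ref{theo:cdrnd}) to eliminate $\partial_{2}L$ and $\partial_{1}L$, and the quantum Leibniz rule (Theorem~\ref{theo:mult}) to fold the surviving terms into a single $\square$-derivative.

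First, I would apply Lemma~\ref{thm:CNSI:SCV}: invariance gives the integral identity \eqref{eq:cnsind} on every subinterval $I\subseteq [a,b]$, so by standard arbitrariness of the limits of integration the integrand itself vanishes pointwise along the extremal, yielding
\begin{equation*}
\partial_{1}L\,\tau + \partial_{2}L\cdot\xi + \partial_{3}L\cdot(\square\xi - \square q\,\square\tau) + L\,\square\tau = 0 .
\end{equation*}
I would then regroup this as
\begin{equation*}
\bigl[\partial_{1}L\,\tau + (L - \partial_{3}L\cdot\square q)\,\square\tau\bigr]
+ \bigl[\partial_{2}L\cdot\xi + \partial_{3}L\cdot\square\xi\bigr] = 0 .
\end{equation*}

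Now I would attack each bracket separately. For the second bracket, substitute the Euler--Lagrange equation \eqref{eq:elnd}, $\partial_{2}L = \square\partial_{3}L$, so that the bracket becomes $\square\partial_{3}L\cdot\xi + \partial_{3}L\cdot\square\xi = \square\bigl(\partial_{3}L\cdot\xi\bigr)$ by Theorem~\ref{theo:mult}. For the first bracket, substitute the DuBois--Reymond equation \eqref{eq:cdrnd}, $\partial_{1}L = \square(L - \partial_{3}L\cdot\square q)$, so that it becomes $\square(L - \partial_{3}L\cdot\square q)\cdot\tau + (L - \partial_{3}L\cdot\square q)\,\square\tau = \square\bigl[(L - \partial_{3}L\cdot\square q)\,\tau\bigr]$, again by the quantum Leibniz rule. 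Adding the two and using linearity of $\square$, the identity collapses to $\square C(t,q,\square q) = 0$, with $C$ exactly the expression \eqref{eq:tnnd}. Finally, I would invoke the quantum Barrow rule (Theorem~\ref{Barrow}) to conclude $C(t_{2}) - C(t_{1}) = 0$ for arbitrary $t_{1},t_{2}$ in the interval, so that $C$ is constant along any nondifferentiable extremal, which is precisely the meaning of a nondifferentiable constant of motion in Definition~\ref{def:leicond}.

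The main obstacle I anticipate is bookkeeping of the Hölder exponents needed to legitimately invoke Theorem~\ref{theo:mult}: each Leibniz step involves products such as $\partial_{3}L\cdot\xi$ and $(L - \partial_{3}L\cdot\square q)\,\tau$, whose two factors must have Hölder regularities summing to strictly more than $1$. Since the hypotheses place $q,\square q\in H^{\alpha}$ with $\tfrac{1}{2}\le\alpha<1$ and $\tau,\xi\in H^{\alpha}$, the required $\alpha+\alpha>1$ is ensured, so the Leibniz step is justified; I would simply make this explicit. The passage from the subinterval identity to the pointwise identity, and the final use of Theorem~\ref{Barrow} to convert $\square C=0$ into $C=\mathrm{const}$, should be routine once the algebraic manipulation above is in place.
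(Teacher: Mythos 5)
Your proposal is correct and follows essentially the same route as the paper's proof: both substitute the Euler--Lagrange equation for $\partial_{2}L$ and the DuBois--Reymond condition for $\partial_{1}L$ into the invariance identity \eqref{eq:cnsind}, fold the result into a single $\square$-derivative of \eqref{eq:tnnd} via the quantum Leibniz rule, and conclude with the Barrow rule together with the arbitrariness of the subinterval $[t_a,t_b]$. The only cosmetic difference is that you first localize the integral identity to a pointwise one before folding, whereas the paper performs the same algebra under the integral sign and only then invokes \eqref{Barrow1}; both orderings are fine.
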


\begin{proof}
Noether's nondifferentiable constant of motion \eqref{eq:tnnd} follows by using
the scale DuBois--Reymond condition \eqref{eq:cdrnd}, the nondifferentiable
Euler--Lagrange equation \eqref{eq:elnd} and Theorem~\ref{theo:mult},
into the necessary condition of invariance \eqref{eq:cnsind}:
\begin{equation}
\label{eq:dtnnd}
\begin{split}
0&=\int_{t_{a}}^{t_{b}}\Bigl[\partial_{1}
L\left(t,q(t),\square q(t)\right)\tau +\partial_{2}
L\left(t,q(t),\square q(t)\right)\cdot\xi \\
& \qquad\qquad +\partial_{3}
L\left(t,q,\square q\right)\cdot\left(\square \xi
-\square q\square \tau\right)
+L\square \tau\Bigr]dt\\
&=\int_{t_{a}}^{t_{b}}\Bigl[\tau\square (L\left(t,q,\square q\right)-\partial_{3}
L\left(t,q,\square q\right)\cdot\square q)\\
&\qquad\qquad +\left(L\left(t,q,\square q\right)
-\partial_{3}
L\left(t,q,\square q\right)\cdot\square q\right)\square \tau \\
&\qquad\qquad +\xi\cdot\square \partial_{3}
L\left(t,q,\square q\right) +\partial_{3}
L\left(t,q,\square q\right)\cdot\square \xi\Bigr]dt\\
&=\int_{t_{a}}^{t_{b}}\frac{\square }{\square
t}\Big\{\partial_{3}
L\left(t,q,\square q\right)\cdot\xi+(L\left(t,q,\square q\right)-\partial_{3}
L\left(t,q,\square q\right)\cdot\square q)\tau\Big\}dt\, .
\end{split}
\end{equation}
Using formula \eqref{Barrow1} and having in mind that
\eqref{eq:dtnnd} holds for an arbitrary $[{t_{a}},{t_{b}}]
\subseteq [a,b]$, we conclude that
$L\left(t,q,\square q\right)\cdot\xi+\Bigl(L\left(t,q,\square q\right)
-\partial_{3}L\left(t,q,\square q\right)\cdot\square q\Bigr)\tau$ is constant.
\end{proof}

If the admissible functions $q$ are differentiable, then the nondifferentiable
constant of motion \eqref{eq:tnnd} reduces to classical Noether's constant of motion
\begin{equation*}
C(t,q,\dot{q}) = \partial_{3} L\left(t,q,\dot{q}\right)\cdot\xi(t,q)
+ \left( L(t,q,\dot{q}) - \partial_{3} L\left(t,q,\dot{q}\right)
\cdot \dot{q} \right) \tau(t,q).
\end{equation*}
For this reason, the term
$\partial_{3}L(t,q,\square q))$ can be seen
as the \emph{momentum} while the term
$L(t,q,\square q)-\partial_{3} L(t,q,\square q)\cdot\square q$
can be interpreted as \emph{energy}.


\section{An Application}
\label{sec:appl}

In \cite[\S 3]{Cresson:2011}, a linear Schr\"{o}dinger equation,
with particular interest in quantum mechanics, is studied.
It is proved that, under certain conditions,
solutions of the linear Schr\"{o}dinger equation
coincide with the extremals of a certain functional \eqref{Pe}
of Cresson's quantum calculus of variations. In this section we use our
nondifferentiable Noether's theorem to find constants of motion for the problem
studied in \cite[\S 3]{Cresson:2011}. Precisely, consider the following
linear Schr\"{o}dinger equation:
\begin{equation}
\label{eq:sch1}
i\bar{h}\frac{\partial \Psi(t,q)}{\partial
t}+\frac{\bar{h}^2}{2m}\sum_{j=1}^{d}\frac{\partial^2 \Psi(t,q)}{\partial q^{2}_{j}}=U(q)\Psi(t,q),
\end{equation}
where $\bar{h}=\frac{h}{2\pi}$, $h$ is the Planck constant, $m>0$ the mass of particle,
$U:\mathbb{R}\longrightarrow\mathbb{R}$, $\Psi:\mathbb{R}^d\times\mathbb{R}\longrightarrow \mathbb{C}$
is the wave function associated to the particle on $\mathcal{C}^2(\mathbb{R}^d\times\mathbb{R},\mathbb{C})$,
subject to the condition
\begin{equation*}
\frac{\square q_{k}(t)}{\square
t}=-i2\gamma\frac{\partial \ln(\Psi(t,q))}{\partial q_{k}}\,,
\quad k=1,\ldots,d,
\end{equation*}
with $\gamma=\frac{\bar{h}}{2m}\in\mathbb{R}$. In \cite[Theorem~9]{Cresson:2011} it is shown
that the solutions $q(\cdot)$ of \eqref{eq:sch1} coincide with Euler--Lagrange extremals
of functional \eqref{Pe} with Lagrangian
\begin{equation*}
L(t,q(t),\square q(t))=\frac{1}{2}m\left(\square q(t)\right)^2-U(q)\, .
\end{equation*}
The functional
\begin{equation*}
I[q(\cdot)] =\frac{1}{2}\int_a^b
\left[m\left(-i 2\gamma\sum_{k=1}^{d}\frac{\partial \ln(\Psi(t,q))}{\partial
q_{k}}\right)^2 -2U(q)\right]dt
\end{equation*}
is invariant in the sense of Definition~\ref{def:invnd} under the
symmetries $(\tau,\xi)=(c_{k},0)$, where $c_{k}$ is an arbitrary constant.
It follows from our Theorem~\ref{theo:tnnd} that
\begin{equation}
\label{eq:scm:Ex1} 2m\left(\gamma\sum_{k=1}^{d}\frac{\partial
\ln(\Psi(t,q))}{\partial q_{k}}\right)^2+U(q)=\frac{1}{8m}\left(\frac{h}{\pi}\sum_{k=1}^{d}\frac{\partial
\ln(\Psi(t,q))}{\partial q_{k}}\right)^2+U(q)
\end{equation}
is a nondifferentiable constant of motion: \eqref{eq:scm:Ex1} is preserved along
all solutions $q(t)$ of the linear Schr\"{o}dinger equation \eqref{eq:sch1}.


\section*{Acknowledgements}

This work was partially supported by Portuguese funds through
the Center for Research and Development in Mathematics and Applications (CIDMA)
and the Portuguese Foundation for Science and Technology (FCT),
within project PEst-OE/MAT/UI4106/2014, and the project ``Mathematics and Applications''
from \emph{C\^{a}mara de Investiga\c{c}\~{a}o} (CAMI), University of Cape Verde, Cape Verde.
Gast\~{a}o S. F. Frederico is also grateful to IMPA
(Instituto Nacional de Matem\'{a}tica Pura e Aplicada), Rio de Janeiro, Brasil,
for a one-month post-doc visit during February 2014. The hospitality and
the good working conditions at IMPA are here very acknowledged.
Finally, the authors would like to thank a reviewer for valuable comments.



\end{document}